\begin{document}

\title{ Topology of Asymptotic Cones and $\mathcal{S}$-machines }

\author{Anthony Gasperin}

\institute{Theoretical Computer Science Department,University of Geneva, Switzerland\\
	\email{anthony.gasperin@unige.ch}}

\date{\today}

\maketitle

\newtheorem{Def}{Definition}
\newtheorem{Lem}{Lemma}
\newtheorem{Th}{Theorem}
\newtheorem{Conj}{Conjecture}
\newtheorem{Prop}{Proposition}
\newtheorem{Cor}{Corollary}
\newtheorem{Claim}{Claim}
\newtheorem{Rem}{Remark}
\newtheorem{Stat}{Statement}
\begin{abstract}
Sapir, Birget and Rips showed how to construct groups from Turing machines. To achieve such a construction they introduced the  notion of $\mathcal{S}$-machine.
Then considering a simplified $\mathcal{S}$-machine Sapir and Olshanskii showed how to construct a group such that each of its asymptotic cone is
non-simply connected. Still using the notion $\mathcal{S}$-machine, they constructed a group with two asymptotic cone non-homeomorphic. 
In this paper we show that each asymptotic cone of a group constructed following the whole method of Sapir, Birget and Rips is not simply connected.
\end{abstract}

\section{Introduction}

Let $(X,d_X)$ a metric space $s=(s_n)$ a sequence of points in $X$, $d=(d_n)$ an increasing sequences of numbers with 
$\text{lim }d_n=\infty$ and let $\omega: P(\mathbb{N}) \to \{0,1\}$ be a non-principal ultrafilter. An \textit{asymptotic cone} of
$Con_\omega(X,s,d)$ of $(X,d_X)$ is the subset of the cartesian product $X^\mathbb{N}$ consisting of sequences $(x_i)_{i \in \mathbb{N}}$
with $\text{lim}_\omega \frac{d_X( s_i,x_i)}{d_i} < \infty$ where two sequences $(x_i)$ and $(y_i)$ are equivalent if and only if
$\text{lim}_\omega \frac{d_x(x_i,y_i)}{d_i}=0$. The distance between two elements $(x_i),(y_i)$ in the asymptotic cone 
$Con_\omega(X,s,d)$ is defined as $\text{lim}_\omega \frac{d_X(x_i,y_i)}{d_i}$. Here $\text{lim}_\omega$ is defined as follows. If $a_n$
is a bounded sequence of real numbers then $\text{lim}_\omega(a_n)$ is the unique number $a$ such that for every $\epsilon >0$,
$\omega(\ \{ n\ |\ |a_n-a| < \epsilon \}\ )=1$.
The \emph{asymptotic cones} of a finitely generated group $G$ are asymptotic cones of the Cayley graph of $G$ and it well known that 
they do not depend on the choice of the sequence $s$. It is then assumed that $s=(1)$ where $1$ is the identity. 
Given an ultrafilter $\omega$ and an increasing sequence of numbers $d$ the asymptotic cone of a finitely generated group $G$ is then noted $Con_\omega(X,d)$.

A function $f:\mathbb{N} \to \mathbb{N}$ is an isoperimetric function of a finite presentation $\langle X,R \rangle$ of a group $G$ if every word $w$ in $X$,
which is equal to $1$ in $G$, is freely equal to a product of conjugates $\prod^{m}_{i=1} x_i^{-1} r_i x_i$ where $r_i$ or $r^{-1}_i$ is in $R$, $x_i$ is
in $(X \cup X^{-1})^*$ and $m \leq f(|w|)$.
The \emph{Dehn function} of a finite presentation $\langle X,R \rangle$ is defined as the smallest \emph{isoperimetric} function of the presentation.\\
Let $f,g:\mathbb{N}\to \mathbb{N}$ be two functions, $f \preceq g$ if there exists a positive constant $c$ such that $f(n) \leq cg(cn) + cn + c$.
If all functions considered grow at least as fast as $n$ $f(n) \preceq g(n)$ if and only if $f(n) \leq cg(cn)$ for some positive constant $c$.
The results of  Sapir, Birget, Rips and Olshanskii consider only the functions which grow at least as fast as $n$. Two functions $f,g$ are called \emph{equivalent} if $f\preceq g$ and $g\preceq f$.

Since the results of \cite{Alo90,Ger92,Mad85} it is well known that the Dehn function corresponding to different finite presentations of the same group are equivalent. 
Thus one can speak about \emph{the Dehn function} of a finitely presented group. In \cite{Ger92,Mad85} it is shown that the Dehn function of a finitely presented group has a recursive upper bound if and only if the group has a decidable word problem.

In \cite{Bir02,Gro93,Ols91} the connections between Dehn functions, asymptotic geometry of groups and computational complexity of the word problem are discussed.
In \cite{Gro93} Gromov showed that if all \emph{asymptotic cones} of a group $G$ are simply connected then $G$ is finitely presented, has polynomial isoperimetric
function and linear isodiametric function. Papasoglu \cite{Pap96} proved that if a finitely presented group has quadratic isoperimetric function then all its
asymptotic cones are simply connected. 
Sapir, Birget and Rips in \cite{Sap02} introduced the concept of $\mathcal{S}$-machines to show that the word problem of a finitely generated group is decidable
in polynomial time if and only if this group can be embedded into a group with polynomial isoperimetric function.
Olshanskii and Sapir in \cite{Sap05} constructed a group with polynomial isoperimetric function, linear 
isodiametric function and non-simply connected asymptotic cones, the group is roughly a $\mathcal{S}$-machine introduced in \cite{Sap02}. In \cite{Ols05} 
they also constructed a group with two non-homeomorphic asymptotic cones  using the concept of $\mathcal{S}$-machine.

In this paper we show that the whole machinery of Birget, Sapir and Rips leads groups with non-simply connected cones for every Turing machine considered. 
Indeed we show that the construction of \cite{Sap02} involves relations that totally break the topology of the 
asymptotic cones.

\section{Preliminaries}  
This section introduces briefly the machinery introduced by Sapir, Birget and Rips in \cite{Sap02}.
We need to explain, at least superficially, what is a $\mathcal{S}$-machine, how it works and especially how it leads to the construction of groups.

\subsection{$\mathcal{S}$-machines}

This section is closely modeled on \cite{Sap02}, we recall the notion of $\mathcal{S}$-machine defined in the work of Sapir, Birget and Rips in \cite{Sap02}.
To begin let us present the initial assumptions needed for the construction.   
In \cite{Sap02} every Turing machine is modified according to the following lemma:
\begin{Lem}\cite{Sap02} \label{machine} For every Turing machine $M$ recognizing a language $L$ there exists 
a Turing machine $M'$ with the following properties.
\begin{itemize}
	\item The language recognized by $M'$ is $L$.
	\item $M'$ is symmetric, that is, with every command $U \to V$ it contains the inverse command
	$V \to U$.
	\item The time, generalized time, space and generalized space functions of $M'$ are equivalent
	to the time function of $M$. The area function of $M'$ is equivalent to the square of the time function of $M$.
	\item The machine accepts only when all tapes are empty.
	\item Every command of $M'$ or its inverse has one of the following forms for some $i$
	\begin{enumerate}
		\item $\{q_1\omega \to q'_1\omega,\dots,q_{i-1}\omega \to q'_{i-1}\omega,aq_i\omega \to q'_i \omega,q_{i+1}\omega \to q'_{i+1} \omega, \dots \}$
		\item $\{q_1 \omega \to q'_1 \omega,\dots,q_{i-1} \omega \to q'_{i-1}\omega, \alpha q_i \omega \to
		\alpha q'_i\omega,q_{i+1}\omega \to q'_{i+1}\omega,\dots \}$ where "$a$" belongs to the tape alphabet of tape $i$, and $q_j,q'_j$ are state letters of tape $j$.
		\item The letters used on different tapes are from disjoint alphabets. This includes the 
		state letters.
	\end{enumerate}
\end{itemize}
\end{Lem}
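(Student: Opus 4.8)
The plan is to obtain $M'$ from $M$ by a sequence of standard simulations, each with controlled overhead, and then to verify the five bullet points. I would first put $M$ into a multi-tape normal form realizing the command shapes (1)--(2). Beginning from an ordinary machine recognizing $L$, I simulate it by a machine in which every tape carries its own state letter $q_j$ and is delimited by the end marker $\omega$; renaming letters makes the tape alphabets and the state letters pairwise disjoint, which is condition (3). I then decompose each transition into elementary moves, introducing auxiliary state letters, so that a single command either erases one letter $a$ at the active end of tape $i$ while rewriting all the state letters (form 1) or merely inspects an end letter of tape $i$ and rewrites the state letters (form 2). This costs only a constant factor in both time and space.

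The second and crucial stage is to make the machine symmetric by adjoining, for every command $U \to V$, its inverse $V \to U$. The main obstacle is that symmetrization must not enlarge $L$: the inverse commands could in principle create spurious computations from an input configuration $C(w)$ to the accept configuration for some $w \notin L$. I would prevent this by first arranging that the machine be reversible, i.e.\ both deterministic and co-deterministic, so that the directed configuration graph is a disjoint union of simple paths. Passing to the symmetric closure then never merges distinct paths, whence the accept configuration is reachable from $C(w)$ in the symmetric system exactly when it was reachable by the original forward computation; the recognized language is therefore unchanged. Reversibility is secured by the classical device of maintaining a monotone history track, once more at the cost of a constant factor in time and space.

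Third, I would append a deterministic clean-up phase that, upon reaching the unique accepting control state, sweeps each tape with commands of forms (1)--(2) and erases it, halting only when all tapes are empty; this yields the fourth bullet and, since no tape ever exceeds the generalized space already used, adds at most a number of steps proportional to that space. Finally I would read off the complexity claims: each stage multiplies running time by a constant, so the time and generalized time functions of $M'$ are equivalent to the time function of $M$, and because space is bounded by time throughout, the space and generalized space functions are equivalent to it as well. The area of an accepting computation, namely the total number of tape cells summed over all steps, satisfies area $\le$ time $\times$ space $\le$ time$^{2}$, with a matching lower bound on a suitable family of inputs, so the area function is equivalent to the square of the time function of $M$. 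Throughout, the delicate point remains the symmetrization step, where reversibility is precisely what blocks the inverse commands from producing new accepting computations.
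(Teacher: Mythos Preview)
The paper does not give a proof of this lemma at all: it is stated with the citation \cite{Sap02} and is used as a black box imported from Sapir--Birget--Rips. There is therefore nothing in the present paper to compare your argument against.

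As a reconstruction of what happens in \cite{Sap02}, your outline is in the right spirit, but it is too quick at the one place that actually matters. The quantities \emph{generalized time} and \emph{generalized space} are defined over \emph{all} computations of the symmetric machine $M'$, not only the forward deterministic ones inherited from $M$; likewise the area function. Reversibility of the forward machine guarantees that symmetrization does not change the accepted language, but it does not by itself bound the length or space of an arbitrary back-and-forth computation in $M'$ between an input configuration and the accept configuration. One needs an additional monotonicity device (in \cite{Sap02} this is a separate counter/clock tape that strictly advances under every command, so that any trajectory in the symmetrized machine projects to a monotone walk on the clock and hence cannot be longer than the forward run) to force every computation of $M'$ to have length and space comparable to the deterministic one. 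Your history track gives reversibility and hence preserves $L$, but you should say explicitly why it also controls the generalized functions; as written, the sentence ``each stage multiplies running time by a constant'' silently conflates time with generalized time. The same remark applies to the clean-up phase: once you symmetrize those erasing commands too, you must argue that one cannot wander arbitrarily using their inverses.
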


Let us present how Sapir, Birget and Rips define a $\mathcal{S}$-machine in \cite{Sap02}, roughly speaking it is defined as a rewriting system. 
A $\mathcal{S}$-machine then comes with a \textit{hardware}, a \textit{language of admissible words}, and
a set of \textit{rewriting rules}.
A \textit{hardware } of a $\mathcal{S}$-machine is a pair $(Y,Q)$ where $Y$ is an $n$-vector of (not necessarily disjoint) sets $Y_i$, $Q$ is an $(n+1)$-vector of 
disjoints sets $Q_i$ with $(\bigcup Y_i) \cap (\bigcup Q_i) = \emptyset$.
The elements of $\bigcup Y_i$ are called \textit{tape letters}; the elements of $\bigcup Q_i$ are called \textit{state letters}.
With every hardware $\mathcal{S}=(Y,Q)$ one can associate the \textit{language of admissible words} $L(\mathcal{S})=Q_1 F(Y_1) Q_2 \cdots F(Y_n)Q_{n+1}$ where
$F(Y_i)$ is the language of all reduced group words in the alphabet $Y_j \cup Y^{-1}_j$.
This language completely determines the hardware. One can then describe the language of admissible words instead of describing the hardware $\mathcal{S}$.
If $1 \leq i < j \leq n$ and $W=q_1 u_1 q_2 \cdots u_n q_{n+1}$ is an admissible word, $q_i \in Q_i, u_i \in (Y_i \cup Y^{-1}_i)^*$ then the 
subword $q_i u_i \cdots q_j$ of $W$ is called the $(Q_i,Q_j)$-subword of $W$ ($i < j)$.
The rewriting rules ( $S$-\textit{rules}) have the following form:
\begin{center}
	$[U_1 \to V_1, \dots, U_m \to V_m ]$
\end{center}
where the following conditions hold:
each $U_i$ is a subword of an admissible word starting with a $Q_l$-letter and ending with $Q_r$-letter.
If $i < j$ then $r(i) < l(j)$, where $r(i)$ is the end of $U_i$ and $l(j)$ the start of $U_j$.
Each $V_i$ is a subword of an admissible word whose $Q$-letters belong to $Q_{l(i)} \cup \cdots \cup Q_{r(i)}$.
The machine applies a $S$-rule to a word $W$ replacing simultaneously subword $U_i$ by subword $V_i,i=1,\dots,m$.

As mentioned in \cite{Sap02} there exists a natural way to convert a Turing machine $M$ into a $\mathcal{S}$-machine $\mathcal{S}$; 
one can concatenate all tapes of the given machine $M$ together and replace every command $aq\omega \to q' \omega$ by $a^{-1}q'\omega$. 
Unfortunately the $\mathcal{S}$-machine constructed following this
natural way will not inherit most of the properties of the original machine $M$, that is it will not satisfy anymore the properties of \emph{Lemma} \ref{machine}.
According to \cite{Sap02} the main problem is that it is nontrivial to construct a $\mathcal{S}$-machine which recognizes only positive powers 
of a letter.
Thus in order to construct a $\mathcal{S}$-machine $\mathcal{S}(M)$ which will inherit the desired properties of a Turing machine $M$, Sapir, Birget and Rips 
in \cite{Sap02} constructed eleven $\mathcal{S}$-machines and then used them to construct the final $\mathcal{S}$-machine $\mathcal{S}(M)$ simulating $M$. 
The construction is quite involved and nontrivial, one can see \cite{Sap02} for details.

Taking any Turing machine $M=\langle X,Y,Q,\Theta,\vec{s}_1,\vec{s}_0 \rangle$ and modifying it according to Lemma \ref{machine}, \cite{Sap02}
constructs a $\mathcal{S}$-machine $\mathcal{S}(M)$ simulating $M$. The $\mathcal{S}$-machine constructed in \cite{Sap02} is quite long to define, next we explain 
briefly the main part of the construction, for proofs and deeper understanding of the whole machinery the reader can refer to \cite{Sap02}. 
The main idea of the construction is to simulate the initial machine $M$ using eleven $\mathcal{S}$-machines $S_1,S_2, \dots,S_9,S_{\alpha},S_{\omega}$.
We will explain how the machines  $S_4,S_9,S_{\alpha},S_{\omega}$ are used in the construction of $\mathcal{S}(M)$. The others $\mathcal{S}$-machines are used to construct 
$S_4$ and $S_9$ and are rather of technical importance.
First we need to describe what is an admissible word of the $\mathcal{S}$-machine $\mathcal{S}(M)$.
For every $q \in Q$ the word $q\omega$ is denoted by $F_q$, in every command of $M$ the word $q\omega$ is replaced by $F_q$.
Left marker on tape $i$ is denoted by $E_i$. This gives a Turing Machine $M'$ such that the configurations of each tape have the form 
$E_i u F_q$ where $u$ is a word in the alphabet of tape $i$ and every command or its inverse has one of the forms:
\begin{equation}\label{positive_rules}
	\{ F_{q_1} \to F_{q'_1}, \dots, aF_{q_i} \to F_{q'_i}, \dots,F_{q_k}\to F_{q'_k} \}
\end{equation}
where $a \in Y$ or 

\begin{equation}
	\{ F_{q_1} \to F_{q'_1}, \dots, E_i F_{q_i} \to E_i F_{q'_i} ,\dots , F_{q_k} \to F_{q'_k} \}.
\end{equation}

An admissible word of the considered $\mathcal{S}(M)$ machine is a product of three parts. 
The first part has the form 
\begin{center}
	$E(0) \alpha^{n_1} x(0) \alpha^{n_2} F(0)$.
\end{center}
The second part is a product of $k$ words of the form
\begin{center}
	$E(i)v_ix(i)w_iF(i)E'(i) p(i) \Delta_{i,1} q(i) \Delta_{i,2}r(i) \Delta_{i,3} s(i) \Delta_{i,4} t(i) \Delta_{i,5}$\\ 
	$u(i) \Delta_{i,6} \overline{p}(i) \Delta_{i,7} \overline{q}(i) \Delta_{i,8} \overline{r}(i) \Delta_{i,9}$\\
	$\overline{s}(i) \Delta_{i,10} \overline{t}(i)\Delta_{i,11} \overline{u}(i) \Delta_{i,12} F'(i), i =1,\dots,k$
\end{center}
The third part has the form 
\begin{center}
	$E'(k+1) \omega^{n'_1}x'(k+1)\omega^{n'_2}F'(k+1)$.
\end{center}
Here $v_i,w_i$ are group words in the alphabet $Y_i$ of tape $i$, and $\Delta_{i,j}$ is a power of $\delta$.
The letters $E(i),x(i),F(i),E'(i),p(i),q(i),r(i),s(i),t(i),u(i),\overline{p}(i),\overline{q}(i),\overline{r}(i)$,
$\overline{s}(i),\overline{t}(i),\overline{u}(i),F'(i)$ belong to disjoint sets of state letters.

The letters $x(i),p(i),q(i),r(i),s(i),t(i),u(i),\overline{p}(i),\overline{q}(i),\overline{r}(i)$,
$\overline{s}(i), \overline{t}(i), \overline{u}(i)$
are called standard and are included into the corresponding sets 
$\mathbf{X}(i),\mathbf{P}(i),\mathbf{R}(i),\mathbf{S}(i),\mathbf{T}(i),\mathbf{U}(i),\overline{\mathbf{P}}(i),\overline{\mathbf{Q}}(i)$
,$\overline{\mathbf{R}}(i),\overline{\mathbf{S}}(i),\overline{\mathbf{T}}(i),\overline{\mathbf{U}}(i),(i=1,\dots,k)$.
Let $\tau$ be a command in $\Theta$ of the form (\ref{positive_rules}) ( command of the form (\ref{positive_rules}) are called \textit{positive } and their inverse \textit{negative}).
For every $\gamma \in \{4,9,\alpha,\omega\}$ and for each component $V(i)$ of the vector of sets of state letters, the letters 
$V(i,\tau,\gamma)$ are included into $V(i)$ where $V \in \{ P, Q,R,S,T,U,\overline{P},\overline{Q},\overline{R},\overline{S},\overline{T},\overline{U}\}$.
For each $\mathcal{S}$-machine $S_{\gamma}, \gamma \in\{4,9,\alpha,\omega\}$ a copy of $S_{\gamma}$ is considered where every state letter $z$ is replaced
by $z(j,\tau,\gamma)$ where $j=i$ if $\gamma=4,9, j=0$ if $\gamma=\alpha$ and $j=k+1$ if $\gamma=\omega$. 
These state letters are included into the corresponding sets. The state letters we just described are all the state letters of $\mathcal{S}(M)$.
The rules of $\mathcal{S}(M)$ are the rules of $\mathcal{S}_4(\tau),\mathcal{S}_9(\tau),\mathcal{S}_{\gamma}(\tau),\mathcal{S}_{\omega}(\tau)$
for all $\tau \in \Theta$ of the form (\ref{positive_rules}) plus the connecting rules.
Basically the connecting rules allow to go from a machine to another one, there are five such rules:
$R_4(\tau),R_{4,\alpha}(\tau),R_{\alpha,\omega}(\tau),R_{\omega,9}(\tau),R_9(\tau)$.
They can be described informally as follows.
$R_4(\tau)$ turns on the machine $\mathcal{S}_4(\tau)$. $R_{4,\alpha}(\tau)$ turns on the machine $\mathcal{S}_{\alpha}(\tau)$ when 
$\mathcal{S}_4(\tau)$ finishes its work, $R_{\alpha,\omega}(\tau),R_{\omega,9}(\tau)$ do the same with the corresponding $\mathcal{S}$-machines.
$R_9(\tau)$ turns off $\mathcal{S}_9(\tau)$ and gets the machine ready to simulate the next transition from $\Theta$.
This machinery contains all the necessary steps to simulate a rule of the machine $M$.

Formally speaking, to every configuration $c=(E_1 v_1F_{q_1},\dots,E_k v_k F_{q_k})$ of the machine $M$ is associated the following
admissible word $\sigma(c)$ of $\mathcal{S}(M)$:\\
$E(0) \alpha^n x(0) F(0)$\\
$E(1)v_1 x(1) F_{q_1}(1)E'(1)p(1)\delta^{||v_1||}q(1)r(1)s(1)t(1)u(1)$\\
$\overline{p}(1) \overline{q}(1) \overline{r}(1) \overline{s}(1)\overline{t}(1)\overline{u}(1)F'_{q_1}(1)\dots$\\
$E(k)v_k x(k) F_{q_k}(k)E'(k)p(k)\delta^{||v_k||}q(k)r(k)s(k)t(k)u(k) $\\
$\overline{p}(k) \overline{q}(k) \overline{r}(k) \overline{s}(k)\overline{t}(k)\overline{u}(k)F'_{q_k}(k)$\\
$E'(k+1)x'(k+1)\omega^n F'(k+1)$, where $||v||$ is the algebraic sum of the degree of the letters in $v$.

The construction of such $\mathcal{S}$-machine allows to construct a group presentation, once again
this part is strongly modeled on \cite{Sap02}.
Let $\mathcal{S}(M)$ be the $S$-machine as constructed before. Let $Y$ be the vector of sets of tape letters, and let $Q$ be the vector
of state letters of $\mathcal{S}(M)$. One can remark that $Q$ has $17k+6$ components which \cite{Sap02} denotes by $Q_1,\dots,Q_{17k+6}$. 
In \cite{Sap02} Sapir, Birget and Rips noticed that $Q_1=\textbf{E}(0),Q_2=\textbf{X}(0),Q_3=\textbf{F}(0), Q_{17k+4}=\textbf{E}'(k+1), Q_{17k+5}=\textbf{X}(k+1),Q_{17k+6}=\textbf{F}'(k+1)$.
Let $\mathbf\Theta_+$ the set of positive rules of $\mathcal{S}(M)$ and $N$ a positive integer. 
To construct their group $G_N(\mathcal{S})$ Sapir, Birget and Rips take the following generating set :
\begin{equation}
	A=\bigcup\limits^{17k+6}_{i=1} Q_i \cup \{ \alpha,\omega,\delta \} \cup \bigcup\limits_{i=1}^{k} Y_i \cup \{ \kappa_j | j=1,\dots,2N \} \cup \mathbf\Theta_+.
\end{equation}
and the following set $P_N(\mathcal{S})$ of relations:
\begin{enumerate}
	\item \textit{Transitions relations}. These relations correspond to elements of $\mathbf\Theta_+$.
	Let $\tau \in \mathbf\Theta_+, \tau=[U_1 \to V_1,\dots,U_p \to V_p]$. Then relations $\tau^{-1}U_1\tau=V_1,\dots,\tau^{-1}U_p\tau=V_p$ are included 
	into $P_N(\mathcal{S})$. If for some $j$ from $1$ to $17k+6$ the letters from $Q_j$ do not appear in any of the $U_i$ then the relations
	$\tau^{-1} q_j \tau =q_j$ for every $q_j \in Q_j$ are also included.

	\item \textit{Auxiliary relations}. These are all possible relations of the form $\tau x= x \tau$ where 
	$x \in \{\alpha,\omega,\delta \} \cup \bigcup_{i=1}^k Y_i,\tau \in \mathbf\Theta_+$.

	\item \textit{The hub relation}. For every word $u$ let $K(u)$ denotes the following word:
	\begin{center}
		$K(u) \equiv (u^{-1} \kappa_1 u \kappa_2 u^{-1} \kappa_3 u \kappa_4 \dots u^{-1} \kappa_{2N-1} u \kappa_{2N})\times $\\ 
		$(\kappa_{2N} u^{-1} \kappa_{2N-1}u \dots \kappa_2u^{-1}\kappa_1 u)^{-1}$.
	\end{center}
	Then the hub relation is $K(W_0)=1$, where $W_0$ is the accepting configuration of the $\mathcal{S}$-machine.
\end{enumerate}
The objective  of Sapir, Birget and Rips \cite{Sap02} in constructing such groups is to prove the following theorem :
\begin{Th} {\cite{Sap02}} {\label{supertheorem}}
Let $L \subseteq X^{+}$ be a language accepted by a Turing machine $M$ with a time function $T(n)$ for which $T(n)^4$ is superadditive.
Then there exists a finitely presented group $G(M)=\langle A \rangle$ with Dehn's function equivalent to $T(n)^4$, the smallest, isodiametric function
equivalent to $T^3(n)$, and there exists an injective map $H: X^+ \to (A \cup A^{-1})^+$ such that 
\begin{enumerate}
	\item $u \in L$ if and only if $H(u)=1$ in $G$;
	\item $H(u)$ has length $O(|u|)^2$ and is computable in time $O(|u|)$.
\end{enumerate}
\end{Th}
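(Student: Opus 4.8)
\emph{Sketch of proof.}\ The plan is to build a dictionary between accepting computations of the $\mathcal{S}$-machine $\mathcal{S}(M)$ and reduced van Kampen diagrams over the presentation $\langle A \mid P_N(\mathcal{S})\rangle$, and then to read the language equivalence and the area/diameter estimates directly off this dictionary. First I would make the embedding $H$ explicit: for an input $u \in X^{+}$ let $c_u$ be the start configuration of $M$ carrying $u$ on the first tape with empty work tapes, and let $H(u)$ be the word over $A$ obtained by inserting the admissible word $\sigma(c_u)$ into the hub template $K(\cdot)$. Injectivity, the length bound $|H(u)| = O(|u|^{2})$ and linear-time computability are then immediate from the fixed skeleton of $\sigma$ and the counters $\alpha^{n}, \omega^{n}, \delta^{\|v\|}$. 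The forward half of the equivalence is mechanical: if $u \in L$ then $\mathcal{S}(M)$ has a computation $\sigma(c_u) \to \cdots \to W_0$, each applied rule $\tau$ becomes a conjugation through the transition relations $\tau^{-1}U_i\tau = V_i$ and the auxiliary relations, the alternating $\kappa$-copies in $K$ leave room to replay this computation, and the hub relation $K(W_0)=1$ closes up, giving $H(u)=1$.

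For the converse and for all the quantitative bounds I would pass to a reduced van Kampen diagram $\Delta$ with trivial boundary label $w$, $|w|=n$, and analyze it through two families of bands: $\theta$-bands, the maximal strips of cells sharing a rule letter $\tau \in \mathbf\Theta_+$, and $q$-bands, the maximal strips sharing a state letter from a fixed $Q_j$. The structural core consists in showing that (i) a reduced diagram contains no $\theta$-annulus and no $q$-annulus, and (ii) a $\theta$-band and a $q$-band cross at most once. Granting this, the sub-diagrams cut out between consecutive hubs are forced to be trapezia, and each trapezium is exactly the space-time diagram of an $\mathcal{S}(M)$-computation: its two horizontal sides read successive admissible words and its $\theta$-bands read the rules applied. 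Reading a computation off such a trapezium yields the converse $H(u)=1 \Rightarrow u\in L$, while the area of the trapezium is $\simeq$ the area of the computation and its height is $\simeq$ the generalized time.

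The exponents now come from Lemma \ref{machine} together with the simulation overhead. Since $M'$ has time and space $\simeq T$ and area $\simeq T^{2}$, the eleven sub-machines $S_1,\dots,S_9,S_{\alpha},S_{\omega}$ make $\mathcal{S}(M)$ run with generalized time $\simeq T^{3}$ and computation-area $\simeq T^{4}$; substituting these into the trapezium estimates gives the upper bounds, Dehn function $\preceq T^{4}$ and isodiametric function $\preceq T^{3}$. For the matching lower bounds I would fix $N$ large and exhibit, at each length, an input $u$ whose accepting computation is essentially unique with area $\simeq T^{4}$ and length $\simeq T^{3}$; by (i)--(ii) any filling of $H(u)$ must contain the corresponding trapezium, so its area and diameter are at least $\simeq T^{4}$ and $\simeq T^{3}$. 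Superadditivity of $T^{4}$ is used precisely here, to upgrade these single hard inputs into a lower bound on the Dehn function as an equivalence class of functions.

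The hard part is the structural step (i)--(ii). Excluding reduced $\theta$- and $q$-annuli and bounding band crossings is where the specific design of $\mathcal{S}(M)$ and the disjointness of the alphabets of the eleven sub-machines become indispensable, and where essentially all of the work of \cite{Sap02} is concentrated. I expect to prove it by induction on the area of $\Delta$, performing surgeries that delete cancelling cell pairs and making a case analysis of how two bands can meet; once annuli are ruled out, the decomposition of $\Delta$ into hubs and trapezia is automatic and the remainder is bookkeeping with the complexity bounds of Lemma \ref{machine}.
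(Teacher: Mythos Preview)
The paper does not prove this theorem. Theorem~\ref{supertheorem} is quoted verbatim from \cite{Sap02} as background and motivation (``The objective of Sapir, Birget and Rips \cite{Sap02} in constructing such groups is to prove the following theorem''); no argument for it appears anywhere in the present paper. So there is no ``paper's own proof'' against which your proposal can be compared.

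That said, your sketch is a fair high-level summary of the strategy actually carried out in \cite{Sap02}: define $H(u)=K(\sigma(c_u))$, translate accepting computations into fillings via the transition and auxiliary relations closed by the hub, and conversely analyse a minimal van Kampen diagram through $\theta$-bands, $q$-bands and hubs to recover a computation and hence the area and diameter bounds. Two caveats. First, the genuinely hard structural ingredient in \cite{Sap02} is not just ``no $\theta$- or $q$-annuli and at most one crossing''; the bulk of the work is controlling diagrams with several hubs (showing hubs can be reduced in pairs, bounding the number of hubs linearly in $|\partial\Delta|$) and then deriving the quadratic-in-time area bound for computations of $\mathcal{S}(M)$. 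Your parenthetical ``once annuli are ruled out, the decomposition of $\Delta$ into hubs and trapezia is automatic'' hides exactly the step where the parameter $N$ and the specific form of $K(\cdot)$ are used. Second, your attribution of the exponents (``the eleven sub-machines make $\mathcal{S}(M)$ run with generalized time $\simeq T^{3}$ and computation-area $\simeq T^{4}$'') is plausible but is itself a theorem of \cite{Sap02}, not something that drops out of Lemma~\ref{machine}; in an actual proof you would need to invoke the corresponding lemmas there rather than assert the exponents.
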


\section{The machine $\mathcal{S}_4$}

As we already saw it, the construction of $\mathcal{S}$-machine in \cite{Sap02} involves eleven others $\mathcal{S}$-machine.
We shall focus on four machines namely $\mathcal{S}_1$,$\mathcal{S}_2,\mathcal{S}_3,\mathcal{S}_4$ of \cite{Sap02}. As we will see the combination of 
some rule from these machines
allows to construct words that deny the necessary condition the following statement :

\begin{Stat}\label{Grom} Suppose that an asymptotic cone $Con_\omega(G,d)$ is simply connected then for every $M > 1$ there exists a number 
$k$ such that for every constant $C \geq 1$, every loop $l$ in the Cayley graph of $G$ satisfying 
$\frac{1}{C}d_m \leq |l| \leq Cd_m$ for any sufficient large $m$, bounds a disc that can be subdivided into $k$ subdisc with perimeter
at most $\frac{|l|}{M}$.
\end{Stat}
The reader will find a reference of this statement in \cite{Sap05}.
Therefore such words will ensure that the asymptotic cones of the 
group $G_N(\mathcal{S})$ are not simply connected. The result is independent of the Turing machine considered and thus can be concluded 
for each group $G_N(\mathcal{S})$ constructed following \cite{Sap02}. Once the words are constructed, the proof works roughly as the one in \cite{Sap05}. 
First we need to explain how is constructed
the machine $\mathcal{S}_4$, this is a critical step in the proof. Formally the machine $\mathcal{S}_4$ is constructed from 
$\mathcal{S}_1,\mathcal{S}_2,\mathcal{S}_3$.

Let us describe the machine $\mathcal{S}_1$. Its hardware is :
\begin{itemize}
	\item $Y(1)=(\{\delta\},\{\delta\},\{\delta\},\{\delta\},\{\delta\})$
	\item $Q(1)=(\{p_1,p_2,p_3\},\{q_1,q_2,q_3\},\{r_1,r_2,r_3\},\{s_1,s_2,s_3\},\{t_1,t_2,t_3\},\{u_1,u_2,u_3\})$.
\end{itemize}
The admissible words of $\mathcal{S}_1$ have the following form:
\begin{equation}
	p \delta^{n_1} q \delta^{n_2} r \delta^{n_3} s \delta^{n_4} t \delta^{n_5} u
\nonumber
\end{equation}
where $p,q,r,s,t,u$ may have indices $1,2,3$ and $n_i \in \mathbb{Z}, i \in \{1,\dots,5\}$.
The program $P(1)$ of $\mathcal{S}_1$ is constructed from the following rules and their inverses.

\begin{enumerate}
	\item $[q_1 \to \delta^{-2} q_1 \delta^2, r_1 \to \delta^{-1} r_1 \delta ]$
	\item $[p_1q_1 \to p_2 q_2, r_1 \to r_2, s_1 \to s_2, t_1 \to t_2, u_1 \to \delta u_2]$
	\item $[p_1\delta q_1 \to p_3 \delta q_3, r_1 \to r_3, s_1 \to s_3, t_1 \to t_3, u_1 \to u_3]$.
\end{enumerate}
The hardware of $\mathcal{S}_2$ is 
\begin{itemize}
	\item $Y(2)=Y(1)$,
	\item $Q(2)=(\{p_1,p_2\},\{q_1,q_2\},\{r_1,r_2\},\{s_1,s_2\},\{t_1,t_2\},\{u_1,u_2\})$.
\end{itemize}
The program $P(2)$ of $\mathcal{S}_2$ consists of the following rules and their inverses:
\begin{enumerate}
	\item $[q_2 \to \delta q_2 \delta^{-1},s_2 \to \delta^{-1} s_2 \delta ]$
	\item $[p_2 \to p_1, q_2r_2s_2 \to q_1 r_1 s_1, t_2 \to t_1, u_2 \to u_1]$.
\end{enumerate}
The machine $\mathcal{S}_3$ in \cite{Sap02} is defined as a \emph{cycle} of machines $\mathcal{S}_1,\mathcal{S}_2$. Roughly speaking 
$\mathcal{S}_3$ is obtained by taking the union of $\mathcal{S}_1$ and $\mathcal{S}_2$ and identifying two state vectors of $\mathcal{S}_1$ with two
state vector of $\mathcal{S}_2$.
The hardware of $\mathcal{S}_3$ is $(Y(3),Q(3))$ is the same as the hardware of $\mathcal{S}_1$.
The program $P(3)$ is constructed from the following rules and their inverses.
\begin{enumerate}
	\item $[q_1 \to \delta^{-2} q_1 \delta^2, r_1 \to \delta^{-1} r_1 \delta]$
	\item $[p_1 q_1 \to p_2q_2, r_1 \to r_2, s_1 \to s_2, t_1 \to t_2, u_1 \to \delta u_2]$
	\item $[p_1 \delta q_1 \to p_3 \delta q_3, r_1 \to r_3, s_1 \to s_3, t_1 \to t_3, u_1 \to u_3]$.

	\item $[q_2 \to \delta q_2 \delta^{-1}, s_2 \to \delta^{-1} s_2 \delta]$
	\item $[q_2 \to p_1, q_2 r_2 s_2 \to q_1 r_1 s_1, t_2 \to t_1, u_2 \to u_1 ]$.
\end{enumerate}

In \cite{Sap02} the machine $\mathcal{S}_4$ is constructed as a \emph{concatenation} of two copies of $\mathcal{S}_3$ with common states 
$p_3,q_3,r_3,s_3$ and $t_3$. Let $\mathcal{S}'_3$ be a copy of the machine $\mathcal{S}_3$. $\mathcal{S}'_3$ is obtained by adding $'$ to all states letters 
of $Q_3$ except $p_3,q_3,r_3,s_3,t_3$ and $u_3$. The set of states of $\mathcal{S}'_3$ is 
$Q'(3)=\{p'_1,p'_2,p_3\} \cup \{q'_1,q'_2,q_3\} \cup \{r'_1,r'_2,r_3\} \cup \{s'_1,s'_2,s_3\} \cup \{t'_1,t'_2,t_3\} \cup \{u'_1,u'_2,u_3\}$.
Let $Q(4)$ constructed as the union of $Q(3)$ and $Q'(3)$. Define $P(4)$ as the union of programs $P(3)$ and $P'(3)$ of the machines
$\mathcal{S}_3$ and $\mathcal{S}'_3$. Denote by $\mathcal{S}_4$ the machine with hardware $(Y(1),Q(4))$ and program $P(4)$.
According to Lemma 4.6 of \cite{Sap02} the machine $\mathcal{S}_4$ tells zero from nonzero and returns all state letters to their original positions.
To understand how we will construct the words that deny the statement \ref{Grom} it is useful to see the critical steps of the simulation.
In \cite{Sap02} the simulation of a Turing machine works as follows. Let $M$ be a Turing machine and $\tau$ a command of $M$ of the form 
\begin{equation}
	\tau=\{F_{q_1} \to F_{q'_1}, \dots, a F_{q_i} \to F_{q'_i},\dots,F_{q_k} \to F_{q'_k} \}.
\nonumber
\end{equation}
Remember that 
$\mathcal{S}_\gamma(\tau)$ is a copy of machine $\mathcal{S}_\gamma$. The machine $\mathcal{S}(M)$ simulates the command $\tau$ as follows.
First, using $\mathcal{S}_4(\tau)$, it is checked whether the word between $E'(i)$ and $F'_{q_i}(i)$ is empty. If it is empty, the execution cannot proceed to 
the next step. Otherwise the machine changes $q_i$ to $q'_j$ in the indices of the $F's$, inserts $a^{-1}$ next to the left of $x(i)$, removes one $\delta$ in the 
word between $E'(i)$ and $F'_{q_i}(i)$, removes one $\delta$ and removes one $\omega$. Using $\mathcal{S}_9(\tau)$ it finally checks if after $a^{-1}$,
the word between $E(i)$ and $F_{q'_j}(i)$ is positive. If it is the case the machine gets ready to execute the next transition.

The critical step for our work is the first one when the machine $\mathcal{S}_4(\tau)$ is used. Indeed it means that for each command 
$\tau$ of the machine $M$ there exists a copy of the rules of $\mathcal{S}_4$ in $\mathcal{S}(M)$. The next section shall explain 
what are the consequences in the group $G_N(\mathcal{S})$.

\section{Consequences in $G_N(\mathcal{S})$}

As we saw previously the machine $\mathcal{S}(M)$ contains copies of rules of the machine $\mathcal{S}_4(\tau)$ where $\tau$ is a command of $M$.
Remember that $G_N(\mathcal{S})$ contains in its presentation the transitions relations and that they correspond to the element of $\mathbf{\Theta_+}$.
Let $\tau \in \mathbf{\Theta_+}, \tau=[U_1 \to V_1,\dots,U_p \to V_p]$. Then relations $\tau^{-1}U_1\tau=V_1,\dots,\tau^{-1}U_p\tau=V_p$ are included 
into $P_N(\mathcal{S})$. If for some $j$ from $1$ to $17k+6$ the letters from $Q_j$ do not appear in any of the $U_i$ then the relations
$\tau^{-1} q_j \tau =q_j$ for every $q_j \in Q_j$ are also included.
Denote $\sigma_i(4,\tau)$ the copy of the rule $\sigma_i$ of $\mathcal{S}_4$. Let $\sigma_1,\sigma_4$ be the following rule of $\mathcal{S}_4$:
\begin{itemize}
	\item $\sigma_1 : [q_1 \to \delta^{-2} q_1 \delta^2, r_1 \to \delta^{-1} r_1 \delta]$,
	\item $\sigma_4 : [q_2 \to \delta q_2 \delta^{-1}, s_2 \to \delta^{-1} s_2 \delta]$.
\end{itemize}
It means that in $G_N(\mathcal{S})$ the following relation exists:
\begin{itemize}
	\item $\sigma_1(4,\tau)^{-1} q_1 \sigma_1(4,\tau) = \delta^{-2} q_1 \delta^{2}$,  
	\item $\sigma_1(4,\tau)^{-1} r_1 \sigma_1(4,\tau) = \delta^{-1} r_1 \delta$,
	\item $\sigma_4(4,\tau)^{-1} q_2 \sigma_4(4,\tau) = \delta q_2 \delta^{-1}$,
	\item $\sigma_4(4,\tau)^{-1} s_2 \sigma_4(4,\tau) = \delta^{-1} s_2 \delta$.
\end{itemize}
Moreover the following relations are also included in the presentation of $G_N(\mathcal{S})$:
\begin{itemize}
	\item $\sigma_1(4,\tau)^{-1} s_2 \sigma_1(4,\tau) = s_2$,
	\item $\sigma_4(4,\tau)^{-1} r_1 \sigma_4(4,\tau) = r_1$.
\end{itemize}
From now on and for the sake of simplicity we denote the rule $\sigma_1(4,\tau)$ (resp. $\sigma_4(4,\tau)$) by $\sigma_1$ (resp. $\sigma_4$).
Let us study briefly the word $\sigma_1^{-n} \sigma_4^{-n} (s_2r_1)^n \sigma_4^{n} \sigma_1^{n}$.

\begin{Lem}\label{cool_word} In $G_N(\mathcal{S})$ the word $\sigma_1^{-n} \sigma_4^{-n} (s_2r_1)^n \sigma_4^{n} \sigma_1^{n}$ is equal 
to $\delta^{-n} (s_2 r_1 )^n \delta^n$.
\end{Lem}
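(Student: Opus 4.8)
The plan is to evaluate the word by peeling off the two layers of conjugation one at a time, working from the inside out, and at each stage exploiting the fact that the rule letters $\sigma_1$ and $\sigma_4$ commute with $\delta$. This commutation is nothing but the auxiliary relation $\tau x = x\tau$ specialized to $x=\delta$ and $\tau\in\mathbf\Theta_+$: since $\sigma_1$ and $\sigma_4$ are (copies of) positive rules of $\mathcal{S}(M)$, they lie in $\mathbf\Theta_+$, so $\sigma_1\delta=\delta\sigma_1$ and $\sigma_4\delta=\delta\sigma_4$ hold in $G_N(\mathcal{S})$. This is the observation that makes every single-step conjugation relation iterable to the $n$-th power.

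First I would handle the inner conjugation by $\sigma_4^{n}$. Conjugation by a fixed element is an automorphism of $G_N(\mathcal{S})$, and the relations give $\sigma_4^{-1}s_2\sigma_4=\delta^{-1}s_2\delta$ together with $\sigma_4^{-1}r_1\sigma_4=r_1$. Using $[\sigma_4,\delta]=1$, a straightforward induction on $n$ yields $\sigma_4^{-n}s_2\sigma_4^{n}=\delta^{-n}s_2\delta^{n}$ and $\sigma_4^{-n}r_1\sigma_4^{n}=r_1$. Since conjugation respects products, it follows that $\sigma_4^{-n}(s_2r_1)^n\sigma_4^{n}=(\delta^{-n}s_2\delta^{n}\,r_1)^n$.

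Next I would apply the outer conjugation by $\sigma_1^{n}$ to the word just obtained. The relations $\sigma_1^{-1}s_2\sigma_1=s_2$ and $\sigma_1^{-1}r_1\sigma_1=\delta^{-1}r_1\delta$, again combined with $[\sigma_1,\delta]=1$, give by induction $\sigma_1^{-n}s_2\sigma_1^{n}=s_2$ and $\sigma_1^{-n}r_1\sigma_1^{n}=\delta^{-n}r_1\delta^{n}$. Conjugating the single factor $\delta^{-n}s_2\delta^{n}r_1$ and pulling the $\sigma_1^{\pm n}$ through the powers of $\delta$, the middle $\delta^{n}\delta^{-n}$ cancels and one finds $\sigma_1^{-n}(\delta^{-n}s_2\delta^{n}r_1)\sigma_1^{n}=\delta^{-n}s_2r_1\delta^{n}$. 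Raising this to the $n$-th power gives $\delta^{-n}(s_2r_1)^n\delta^{n}$, which is exactly the claimed equality.

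The computation is elementary; the only delicate point is the bookkeeping of the cancellations. The main obstacle is to justify rigorously that the commutation $[\sigma_i,\delta]=1$ is genuinely available, i.e. that $\sigma_1,\sigma_4\in\mathbf\Theta_+$ so that the auxiliary relations apply, since without it the single-step conjugation relations could not be iterated and the telescoping would collapse incorrectly. The two cross-relations $\sigma_1^{-1}s_2\sigma_1=s_2$ and $\sigma_4^{-1}r_1\sigma_4=r_1$ must also be invoked explicitly, as they are precisely what let $s_2$ pass unchanged through the $\sigma_1$-layer and $r_1$ through the $\sigma_4$-layer.
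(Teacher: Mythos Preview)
Your argument is correct and follows essentially the same two-stage strategy as the paper: first compute $\sigma_4^{-n}(s_2r_1)^n\sigma_4^{n}=(\delta^{-n}s_2\delta^{n}r_1)^n$, then conjugate the result by $\sigma_1^{n}$ to obtain $\delta^{-n}(s_2r_1)^n\delta^{n}$, using throughout the same four transition relations and the auxiliary commutation $[\sigma_i,\delta]=1$. The only difference is one of presentation: the paper carries out the first stage by a somewhat laborious induction on $n$ with explicit insertions of $\sigma_4^{\pm n}$ inside the word, whereas you invoke directly that conjugation is a homomorphism and iterate the single-letter relations; your route is cleaner but not genuinely different.
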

\begin{proof} First we prove by induction that 
\begin{equation}
\sigma_4^{-n} (s_2 r_1 )^n \sigma_4^{n} 
\nonumber
\end{equation}
is equal to 
\begin{equation}
	(\delta^{-n} s_2 \delta^n r_1 )^n
\nonumber
\end{equation}
If $k=0$ the equality is clear. Let $n \in \mathbb{N}$ and assume 
$\sigma_4^{-k} (s_2 r_1)^k \sigma_4^k = (\delta^{-k}s_2 \delta^k r_1)^k$ is true for $k\leq n$. We shall show that 
\begin{equation} \label{firstrel}
	\sigma_4^{-(n+1)} (s_2 r_1)^{n+1} \sigma_4^{n+1} = (\delta^{-(n+1)} s_2 \delta^{n+1} r_1)^{n+1}
\end{equation}
inserting accordingly the word $\sigma_4^{n} \sigma_4^{-n}$ we deduce from \eqref{firstrel}  
\begin{equation} \label{cooleq}
        \sigma_4^{-1} \boxed{ \sigma_4^{-n}(s_2 r_1)^n \sigma_4^{n}} \sigma_4^{-n} s_2 \sigma_4^{n} \sigma_4^{-n} r_1 \sigma_4^{n+1} 
\nonumber
\end{equation}
applying the induction hypothesis on the word in the box we obtain
\begin{equation}\label{hypded}
	\sigma_4^{-1} (\delta^{-n} s_2 \delta^n r_1)^n \sigma_4^{-n} s_2 \sigma_4^{n} \sigma_4^{-n} r_1 \sigma_4^{n+1}
\end{equation}
since $\delta \in Y_i$ for some $i \leq k$ then $\sigma_4 \delta = \delta \sigma_4$ is an auxiliary relation, then it comes from \eqref{hypded}
\begin{equation}
	\sigma_4^{-1} (\delta^{-n} s_2 \delta^n r_1)^n \sigma_4^{-n} s_2 \sigma_4^{n} r_1 \sigma_4
\nonumber	
\end{equation}
combining the relations $\sigma_4^{-1} s_2 \sigma_4=\delta^{-1} s_2 \delta$ and $\sigma_4 \delta= \delta \sigma_4$ gives
\begin{equation}
	\delta^{-n} \sigma_4^{-1} s_2 \underbrace{\delta^n r_1 \delta^{-n} s_2 \delta^n \dots}_{= (\delta^n r_1 \delta^{-n}s_2)^{n-1}} 
	r_1 \delta^{-n} s_2 \delta^{n} r_1 \sigma_4
\end{equation}
then inserting $\sigma_4 \sigma_4^{-1}$ we obtain
\begin{equation}
	\delta^{-n} \sigma_4^{-1} s_2 \underbrace{\sigma_4 \sigma_4^{-1} \delta^n r_1 \sigma_4 \sigma_4^{-1} \delta^{-n} s_2 \sigma_4 \sigma_4^{-1} 
	\delta^n \dots}_{= (\sigma_4 \sigma_4^{-1} \delta^n r_1 \sigma_4 \sigma_4^{-1} \delta^{-n} s_2)^{n-1}} 
	\sigma_4 \sigma_4^{-1} \delta^n r_1 \sigma_4 \sigma_4^{-1} \delta^{-n} s_2 \sigma_4 \sigma_4^{-1} \delta^{n} r_1 \sigma_4
\end{equation}
but since the letter $r_1$ is never involved in the rule $\sigma_4$ we have $\sigma_4^{-1} r_1 \sigma_4=r_1$ and thus combining it with the auxiliary rule, it comes
\begin{equation}
	\delta^{-n} \sigma_4^{-1} s_2 \underbrace{ \sigma_4 \delta^n r_1 \delta^{-n} \sigma_4^{-1} s_2 \sigma_4  
	\delta^n \dots}_{= (\sigma_4 \delta^n \sigma_4^{-1} r_1 \sigma_4 \delta^{-n} \sigma_4^{-1} s_2)^{n-1}} 
	\sigma_4 \delta^n \sigma_4^{-1} r_1 \sigma_4 \delta^{-n} \sigma_4^{-1} s_2 \sigma_4 \delta^{n} r_1 
\end{equation}
now we can apply relation $\sigma_4^{-1} s_2 \sigma_4= \delta^{-1} s_2 \delta$ and obtain
\begin{equation}
	(\delta^{-(n+1)} s_2 \delta^{n+1} r_1)^n (\delta^{-(n+1)} s_2 \delta^{n+1} r_1)
\end{equation}
and thus 
\begin{equation}\label{intereq}
	\sigma_4^{-(n+1)} (s_2 r_1)^{n+1} \sigma_4^{n+1} = (\delta^{-(n+1)} s_2 \delta^{n+1} r_1)^{n+1}
\end{equation}
Now we shall start from the second member of equation \eqref{intereq} and show the following
\begin{equation}\label{lasteq}
	\sigma_1^{-n} (\delta^{-n}s_2 \delta^{n} r_1)^n \sigma_1^{n}= \delta^{-n} (s_2 r_1)^n \delta^n.
\end{equation}
Inserting the word $\sigma_1^n \sigma_1^{-n}$ accordingly and using auxiliary relation we obtain
\begin{equation}
	\sigma_1^{-n} \delta^{-n} s_2 (\sigma_1^n \delta^n \sigma_1^{-n} r_1 \sigma_1^n \delta^{-n} \sigma_1^{-n} s_2)^{n-1} 
	\sigma_1^n \delta^n \sigma_1^{-n} r_1 \sigma_1^n
\end{equation}
using the relations $\sigma_1^{-1} s_2 \sigma=s_2$ and $\sigma_1 \delta=\delta \sigma_1$ leads to 
\begin{equation}
	\delta^{-n} s_2 (\delta^n \sigma_1^{-n} r_1 \sigma_1^{n} \delta^{-n} s_2)^{n-1} \delta^n \sigma_1^{-n} r_1 \sigma_1^n
\end{equation}
applying $\sigma_1^{-1} r_1 \sigma_1= \delta^{-1} r_1 \delta$ and the auxiliary relation $\sigma_1 \delta = \delta \sigma_1$ gives 
\begin{equation}
	\delta^{-n} (s_2 r_1)^n \delta^n.
\end{equation}
therefore the equality is proved.
\end{proof}

Lemma \ref{cool_word} allows one to consider van Kampen diagram $\Delta_n$ with a boundary labeled by the word 
\begin{equation}
\sigma_1^{-n} \sigma_4^{-n}(s_2 r_2)^n \sigma_4^n \sigma_1^n \delta^{-n} (s_2 r_1)^n \delta^n.
\end{equation}
We shall use such van Kampen diagram to deny the necessary condition of \textbf{Statement \ref{Grom}}. 
That is we show that no loop corresponding to $\Delta_n$ can bound a disc decomposed into at most $k \leq \sqrt{n}$.
First let us recall what is an $x$-band, for every letter $x$. An $x$-edge in a van Kampen diagram is an edge labeled by $x^{\pm 1}$.
An $x$-cell is a cell whose boundary contains an $x$-edge. An $x$-band in a diagram is a sequence of cells containing $x$-edges,
such that every two consecutive cells share an $x$-edge. The boundary of the union of cells from an $x$-band $\mathcal{B}$
has the form $s^{-1}peq^{-1}$ where $s,e$ are the only $x$-edges on the boundary representing respectively the start and the end of the 
band. The paths $p,q$ are called the \emph{sides} of $\mathcal{B}$.

\begin{Th} Let $\omega$ be a non-principal ultrafilter and $(d_i), i \in \mathbb{N}$ an increasing sequence of numbers with 
$\text{lim } d_i = \infty$. Let $Con_\omega(G_N(\mathcal{S}),(d_i))$ be an asymptotic cone of $G_N(\mathcal{S})$. Then 
$Con_\omega(G_N(\mathcal{S}),(d_i))$ is not simply connected.
\end{Th}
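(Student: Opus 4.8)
The plan is to turn the trivial word produced by Lemma \ref{cool_word} into a family of loops whose fillings are too rigid to satisfy the necessary condition in Statement \ref{Grom}. Concatenating the two sides of Lemma \ref{cool_word} gives the loop
\begin{equation}
w_n \equiv \sigma_1^{-n}\sigma_4^{-n}(s_2 r_1)^n \sigma_4^{n}\sigma_1^{n}\,\delta^{-n}(s_2 r_1)^{-n}\delta^{n},
\nonumber
\end{equation}
which equals $1$ in $G_N(\mathcal{S})$ and has length $\Theta(n)$, bounding a van Kampen diagram $\Delta_n$. Since the theorem must hold for an \emph{arbitrary} scaling sequence $(d_i)$, I would first choose, for each large $m$, an index $n_m$ with $|w_{n_m}|$ comparable to $d_m$; this is possible because $|w_n|$ is linear in $n$ and $d_m\to\infty$, and it guarantees $\frac1C d_m\le |w_{n_m}|\le C d_m$ for a fixed $C$ and all large $m$. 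Assuming toward a contradiction that $Con_\omega(G_N(\mathcal{S}),(d_i))$ is simply connected, I fix one value $M>1$ and invoke Statement \ref{Grom}, obtaining a single constant $k$ such that each loop $w_{n_m}$ bounds a disc subdivided into at most $k$ subdiscs of perimeter at most $|w_{n_m}|/M$.

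The next step is a band analysis valid for \emph{any} reduced diagram $\Delta$ with boundary $w_n$. Reading $\partial\Delta$ cyclically, the only boundary occurrences of $s_2$ lie in the two blocks $(s_2r_1)^{\pm n}$, while the only boundary occurrences of $\sigma_4$ lie in $\sigma_4^{-n}$ and $\sigma_4^{n}$. Hence $\Delta$ carries $n$ disjoint $s_2$-bands joining the first block to the second, and $n$ disjoint $\sigma_4$-bands joining $\sigma_4^{-n}$ to $\sigma_4^{n}$. In the cyclic order of the boundary the endpoints of every $s_2$-band separate the endpoints of every $\sigma_4$-band, so, bands being non-self-intersecting and pairwise crossing at most once in a reduced diagram, every $s_2$-band must meet every $\sigma_4$-band. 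The meeting cells are precisely the cells of the relation $\sigma_4^{-1}s_2\sigma_4=\delta^{-1}s_2\delta$, which yields at least $n^2$ crossing cells, independently of the filling.

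I would then compare the crossing grid against the subdivision. Each of the $\ge n^2$ crossing cells lies in exactly one subdisc, so some subdisc $D_j$ contains at least $n^2/k$ of them, and the task is to bound $|\partial D_j|$ from below. A naive count, in which each band entering $D_j$ crosses $\partial D_j$ at least twice, only gives a bound of order $n/\sqrt{k}$, which is consistent with the hypothesis and therefore insufficient; the decisive ingredient is the $\delta$-stretching built into the relations. Conjugating $s_2$ successively through the $\sigma_4$-bands replaces it by $\delta^{-j}s_2\delta^{j}$, so on the inner $\sigma_4$-bands the side-segments between consecutive $s_2$-crossings have length of order $n$. Consequently a subdisc of perimeter only $O(|w_{n_m}|/M)=O(n/M)$ cannot absorb a full deep $\Theta(n)\times\Theta(n)$ block of crossings but at most of order $n$ of them, which forces the number of subdiscs to exceed $\sqrt{n_m}$. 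As $k$ is a fixed constant whereas $n_m\to\infty$, for all large $m$ one would need more than $k$ subdiscs, contradicting Statement \ref{Grom}; hence the cone is not simply connected.

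The step I expect to be the genuine obstacle is the last perimeter estimate: showing rigorously that the metric distortion coming from the conjugation relations $\sigma_1^{-1}r_1\sigma_1=\delta^{-1}r_1\delta$ and $\sigma_4^{-1}s_2\sigma_4=\delta^{-1}s_2\delta$ prevents a short-perimeter subdisc from capturing more than of order $n$ crossing cells. This requires controlling the lengths of the band sides in an arbitrary reduced filling, not merely in the special diagram $\Delta_n$ coming from Lemma \ref{cool_word}, and it is here that the argument has to follow the quantitative estimates of \cite{Sap05} closely.
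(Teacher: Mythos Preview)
Your route diverges from the paper's at the moment you start counting all $n^2$ crossings of $s_2$-bands with $\sigma_4$-bands and then try to pigeonhole those cells into the subdiscs. The paper does not attempt a global area count. It fixes a \emph{single} $\sigma_4$-band --- the one starting at the last $\sigma_4$-edge on the left side --- and uses that its top side represents $\sigma_4^{-n}(s_2r_1)^n\sigma_4^{n}=(\delta^{-n}s_2\delta^{n}r_1)^n$ in $G_N(\mathcal{S})$. That side meets the boundaries of at most $k<\sqrt{n}$ subdiscs, so by pigeonhole some occurrence of $s_2\delta^{n}r_1$ lies entirely inside one $\Delta_i$; hence it is equal in the group to a word $u$ with $|u|\le n/2$. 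The contradiction is then obtained in a \emph{second, auxiliary} reduced diagram $\Gamma$ for this short equality, by running the $s_2$- and $r_1$-bands from the ends of $s_2\delta^n r_1$ and counting the $\delta$-bands trapped between them. So the paper isolates one long geodesic segment and shows it cannot be shortcut, rather than bounding the area of subdiscs.

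The gap you flag is real and, as stated, fatal. The assertion that a subdisc of perimeter $O(n/M)$ can contain at most $O(n)$ crossing cells does not follow from the band combinatorics: in a reduced filling of $D_j$ the number of $(s_2,\sigma_4)$-crossings is bounded only by the product of the numbers of $s_2$- and $\sigma_4$-edges on $\partial D_j$, which is $O(n^2)$, not $O(n)$. Upgrading this requires a length lower bound on the sides of deep bands in an \emph{arbitrary} filling, which is exactly the hard estimate the paper avoids by passing to the auxiliary diagram $\Gamma$. There is also a diagram mismatch you do not address: your $n^2$ count is carried out in a global reduced $\Delta$, whereas the $D_j$ come from the hypothetical subdivision and carry their own fillings; you cannot assume the subdivided disc is globally reduced, so the crossing cells you counted need not sit inside the $D_j$ at all. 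The paper's two-stage structure (bands in the glued diagram to locate a segment, then a fresh reduced diagram for the shortcut) is precisely what resolves this tension.
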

\begin{proof}
Let $Con_\omega(G_N(\mathcal{S}),(d_i))$ be an asymptotic cone of $G_N(\mathcal{S})$. Fix $n=d_m$ for a large $m$.
Let $u_n=\sigma_1^{-n} \sigma_4^{-n}(s_2 r_1)^n \sigma_4^n \sigma_1^n \delta^{-n} (s_2 r_1)^n \delta^n$
and $\Delta_n$ the corresponding van Kampen diagram. The top path $t$ of $\Delta_n$ is labeled by $(s_2 r_1)^n$, the bottom path $b$
is labeled by $\delta^{-n} (s_2 r_1)^n \delta^n$. The left and right sides, $l,r$ are labeled by $\sigma_1^{n} \sigma_4^n$.
The perimeter of $\Delta_n$ is $|\Delta_n|=8n$. We shall show that a loop in the Cayley graph of $G_N(\mathcal{S})$ corresponding to
$u_n$ cannot bound a disc decomposed into at most $k \leq \sqrt{n}$ subdiscs of perimeter $n$. Assume that such a decomposition exists, there is
a van Kampen diagram $\Delta$ with boundary label $u_n$ composed of $k$ subdiagrams $\Delta_1,\dots,\Delta_k$ with perimeter at most $n$.
Consider any $\sigma_4$-edges $e$ of the path $l$. Since there is no $\sigma_4$-edges on path $t,b$ any $\sigma_4$-band in $\Delta$ that starts at $e$ 
cannot end on $t,b$. Therefore it finishes on $r$. Moreover the $\sigma_4$-bands do not intersect and thus they connected corresponding $\sigma_4$-edges.
Let $e$ be the $\sigma_4$-edges number $n$ on $l$ and $\mathcal{B}$ be the maximal $\sigma_4$-band starting at $e$. Let $r$ be the top side
of $\mathcal{B}$. The label $Lab(r)$ of path $r$ belongs to the free group $\langle \delta,s_2,r_1 \rangle$ and it is equal to 
$\sigma_4^{-n} (s_2 r_1)^n \sigma^n$ in $G_N(\mathcal{S})$, thus it can be written $(\delta^{-n} s_2 \delta^n r_1)^n$.
Since the number of diagrams $\Delta_i$ is less than $\sqrt{n}$ there is a subpath $w$ of $r$ such that the initial and the terminal vertices of $w$
belong to the boundary of one of the $\Delta_i$. $w$ contains the subword $s_2 \delta^n r_1$. It means that there exists in $G_N(\mathcal{S})$ a word 
$u$ such that $u=w$ and $|u|\leq \frac{n}{2}$ since the perimeter of $\Delta_i$ does not exceed $n$.
Therefore we can consider a reduced diagram $\Gamma$ with boundary $p_1 p_2^{-1}$ where $Lab(p_1)=w,Lab(p_2)=u$. 
We look at the subword $t_1 = s_2 \delta^n r_1$. Let $\mathcal{B}_1$ the maximal $s_2$-band starting on $t_1$ and $\mathcal{B}_2$
the maximal $r_1$-band starting on $t_1$. Denote by $\Gamma_1$ the subdiagram of $\Gamma$ bounded by $q_1=t_1 \setminus \{s_2,r_1\}$, the sides of 
$\mathcal{B}_1$ and $\mathcal{B}_2$ and a part $q_2$ of $p_2$. Let $\partial \Gamma_1=q_1 q'_2$ the boundary of $\Gamma_1$.
We shall bound the length of $q'_2$. One can remark that a cell appearing in a $s_2$-band in $\Gamma$ can be written in the form 
$s_2=\delta \sigma_4^{-1} s_2 \sigma_4 \delta^{-1}$. That is the length of a side of $\mathcal{B}_1$ is at most twice the number of $\sigma_4$-edges in it.
The number of $\sigma_4$-edges on $\mathcal{B}_1$ and $\mathcal{B}_2$ is at most $|p_2| - |q_2| - 2$. Thus we have $|q'_2| \leq 2|p_2| -2|q_2| +q_2 - 4$ and then 
it comes $|q'_2| < 2|p_2| - |q_2|$. It is not difficult to see that the diagram $\Gamma_1$ can be chosen such that its top path $q_2=s_2 \delta^k r_1$.
Moreover in a such diagram there are no $s_2$-cells and thus every $\delta$-band starting on $q_1$ must end on $q'_2$. But this is a contradiction since 
$q'_2 < n$. 

\begin{Rem} The proof works roughly as the proof in  \cite{Sap05}.
\end{Rem}
\end{proof}
\bibliography{biblio}
\bibliographystyle{unsrt}
\end{document}